\documentclass[11pt]{article}
 \usepackage{epsfig}
 \usepackage{amssymb,amsmath,amsthm,amscd}
\usepackage{latexsym}
\usepackage{showlabels}
\usepackage{graphicx}
\usepackage{color}
\usepackage{subfigure}
 \textwidth      142 mm
 \textheight     225 mm
 \topmargin       -10 mm
 \oddsidemargin    9 mm

 \theoremstyle{theorem}
 \newtheorem{thm}{Theorem}[section]
 \newtheorem{lemma}[thm]{Lemma}
  
 \newtheorem{definition}[thm]{Definition}
 
 \newtheorem{cor}[thm]{Corollary}
  \newtheorem{remark}[thm]{Remark}
 \title{Local first integrals for stochastic differential equations}
\author{
  Kaiyin Huang\footnote{School of Mathematics, Sichuan University, Chengdu 630065,  China ({\tt huangky@scu.edu.cn}).},
          \;
          Wenlei Li\footnote{College of Mathematics, Jilin University,
Changchun, Jilin 130012,  China ({\tt  lwlei@jlu.edu.cn}).},
          \;
         Shaoyun Shi\footnote{College of Mathematics, Jilin University,
Changchun, Jilin 130012, China ({\tt  shisy@jlu.edu.cn}).},
          \;
  and Zhiguo Xu\footnote{College of Mathematics, Jilin University,
Changchun, Jilin 130012, China ({\tt xuzg2014@jlu.edu.cn}).}. 
}
\date{\empty}

 \begin{document}
 \maketitle

 \begin{abstract}
Poincar\'{e}'s classical results [H. Poincar\'{e}, Sur l'int\'{e}gration des \'{e}quations
diff\'{e}rentielles du premier order et du premier degr\'{e} I and II, Rend. Circ. Mat. Palermo 5 (1891) 161-191;
11 (1897) 193-239] first provide a link between the existence of analytic first integrals and the resonant relations for
analytic dynamical systems. In this paper, we show that by appropriately selecting the definition of the
stochastic local first integrals, we are able to obtain the stochastic version  of Poincar\'e non-integrability theorem. More specifically, we introduce two definitions of local first integrals for stochastic differential equations(SDEs)
in the sense of probability one and  expectation,  respectively. We present the necessary conditions for
 the existence of functionally independent analytic or rational first integrals of SDEs via the resonances.
  We also  show that for given integrable ordinary differential equations with some nondegeneracy conditions,  there exists a linear stochastic perturbation such that the
corresponding disturbed SDEs have no any analytic first integrals.   Some examples are given to illustrate our results.

 \end{abstract}
\noindent
{\bf Keywords:} First integrals; Stochastic differential equations; Local integrability; Resonance.
 \newpage

\section{Introduction}\label{sec1}

In 1891, Poincar\'e \cite{poincare1891}  provided a criteria to study the non-existence of analytic first integrals for analytic differential systems via resonant relations. He obtained the following classical result (for a proof, see \cite{furta1996}).

\begin{thm}[Poincar\'e non-integrability theorem]\label{poincare}
 Assume that the analytic differential system
 \begin{align}\label{1.1}
  \frac{\mathrm{d}x}{\mathrm{d}t}=f(x),~~~x\in\mathbb{C}^n
 \end{align}
has a singularity at $x=0$, i.e., $f(0)=0$. If the eigenvalues of the Jacobian matrix $Df(0)$ at $x=0$ do not satisfy any $\mathbb{Z^+}$-resonant conditions, then system $(\ref{1.1})$ has no analytic first integrals in a neighborhood of the origin.
\end{thm}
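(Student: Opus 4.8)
The plan is to argue by contradiction and extract a decisive constraint from the lowest-order homogeneous term of a putative first integral. Suppose $\Phi$ were a non-constant analytic first integral near $x=0$; subtracting a constant I may assume $\Phi(0)=0$ and expand it into homogeneous parts $\Phi=\sum_{m\geq s}\Phi_m$ with $\Phi_s\neq 0$ the lowest nonvanishing term, $s\geq 1$. I would also split the vector field as $f(x)=Ax+g(x)$ with $A=Df(0)$ and $g=O(|x|^2)$.

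First I would isolate the leading-order equation. The first-integral identity $\nabla\Phi\cdot f\equiv 0$, collected in degree $s$, receives a contribution only from $\nabla\Phi_s\cdot(Ax)$, since $g$ strictly increases the polynomial degree and all lower parts $\Phi_m$ with $m<s$ vanish; hence $\nabla\Phi_s(x)\cdot(Ax)=0$, i.e. $\Phi_s$ is a homogeneous first integral of the linearization $\dot x=Ax$.

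Next I would study the linear operator $L_A\Phi:=\nabla\Phi\cdot(Ax)$ on the finite-dimensional space of homogeneous polynomials of degree $s$. After a linear change of variables bringing $A$ to Jordan form, in the diagonal case $A=\mathrm{diag}(\lambda_1,\dots,\lambda_n)$ one computes $L_A x^k=\langle k,\lambda\rangle\, x^k$ for each monomial $x^k$ with $|k|=s$, so the eigenvalues of $L_A$ are exactly the quantities $\langle k,\lambda\rangle$. Its kernel is spanned by those monomials with $\langle k,\lambda\rangle=0$, which are precisely the $\mathbb{Z}^+$-resonances. The non-resonance hypothesis therefore forces $\ker L_A=\{0\}$ on every space of homogeneous polynomials of degree $\geq 1$, so $\Phi_s=0$, contradicting its choice. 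This contradiction proves the theorem.

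The hard part will be the non-diagonalizable case. When $A$ has genuine Jordan blocks, $L_A$ is no longer diagonal, but I expect that ordering the monomials suitably (for instance lexicographically) renders $L_A$ upper-triangular with unchanged diagonal entries $\langle k,\lambda\rangle$; since non-resonance keeps every such entry nonzero, $L_A$ stays invertible and the kernel is again trivial, so the argument goes through verbatim. A minor technical point to check is that taking the degree-$s$ component of $\nabla\Phi\cdot f=0$ is legitimate, which follows from absolute convergence of the relevant power series on a small polydisc and the consequent freedom to rearrange terms.
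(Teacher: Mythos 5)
Your proof is correct, and it follows essentially the same route the paper takes: the paper quotes Theorem~\ref{poincare} from the literature (Furta) rather than proving it, but its own proof of Theorem~\ref{the4} is exactly your argument in the deterministic core --- isolate the lowest-order homogeneous part $\Phi_k$, reduce to $\langle\nabla\Phi_k(x),Df(0)x\rangle\equiv 0$, and invoke the spectrum $\{\langle l,\lambda\rangle:|l|=k\}$ of the operator $h\mapsto\langle\nabla h,Ax\rangle$ (Lemma~\ref{bibi}, which also disposes of the Jordan-block case you flag as the ``hard part''). Nothing further is needed.
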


 Let us recall some terminologies. Denote by $\mathrm{U}$ an open set in $\mathbb{C}^n$. A non-constant function $\Phi(x):\mathrm{U}\rightarrow \mathbb{C}$ is called a first
integral of (\ref{1.1}) if it is constant along any solution of (\ref{1.1}). Further, if it is an analytic or rational function with
respect to $x$, then it is called an analytic or rational first integral of system (\ref{1.1}), respectively. We call that the eigenvalues $\mathbf{\lambda} =(\lambda_1,\cdots\lambda_n)$ of a $n\times n$ matrix satisfy a $\mathbb{Z^+}$-resonant condition if
$
\langle \mathbf{\lambda}, \mathbf{k} \rangle=0
$
for some $\mathbf{k}\in(\mathbb{Z}^+)^n/\{0\}$,
where $\mathbb{Z^+}=\mathbb{N}\cup\{0\}$ and $\langle \cdot,\cdot\rangle$ denotes the inner product of two vectors in $\mathbb{C}^n$. 

The main idea of Poincar\'{e}'s result is that he found a relation between the existence of analytic first integrals and the properties of variation equations
along the equilibrium solution, namely, the resonance of eigenvalues of the matrix $Df(0)$. Following Poincar\'{e}'s idea, many scholars have devoted themselves in studying non-integrability and partial integrability for nonlinear systems of differential equations, and there have appeared large numbers of results on this issue, see for instance
Yoshida \cite{yoshida1983},  Furta \cite{furta1996}, Goriely \cite{goriely1996}, Shi \cite{shi2001},
Zhang \cite{zhang2003}, Kwek \emph{et al.}\cite{kwek2003},  Li \emph{et al.}\cite{li2003}, Chen \emph{et al.}\cite{chen2008jde}, Wang \emph{et al.}\cite{cong2011jde}, Romanovski \emph{et al.}\cite{r2014jde}, Du \emph{et al.}\cite{du2016jde} and the references therein.

In recent years, stochastic differential equations(SDEs) appear in many branches of science such as biology, epidemiology, mechanics and economics, and are concerned by more and more scholars. Many dynamical features of SDEs has been also studied such as random periodic solution \cite{zhao2009}, almost automorphic solution \cite{liuzx1}, stability \cite{mao1994}, attractor-repeller pair \cite{liuzx2}, Conley index \cite{liuzx3}, center manifold \cite{boxler1989}, random invariant manifolds \cite{lukn1} and normal forms \cite{lukn2,lukn3}. The aim of this paper is to investigate the first integrals for SDEs and to generalize the Poincar\'e non-integrability theorem from ODEs to SDEs.


Consider the following SDEs of It\^o type
\begin{equation}\label{e1}
 \mathrm{d}X_t=f(X_t)\mathrm{d}t+\sum\limits_{i = 1}^m {{g_i}(X_t)\mathrm{d}B^i_t},
\end{equation}
where  $X_t=(X^1_t,\cdots,X^n_t)^\mathrm{T}\in \mathbb{C}^n$, $f(X_t)=\left(f^1(X_t),\cdots,f^n(X_t)\right)^\mathrm{T}$ and $g_i(X_t)=\left({g_i^1(X_t)},...,g_i^n(X_t)\right)^\mathrm{T}, i=1,\cdots,m,$ are $n$-dimensional vector-valued analytic functions, and $B^i_t$ are jointly independent one-dimensional real Winner processes defined on a probability space $(\Omega,\mathcal{F}, \mathbb{P})$ with a non-decreasing
family of $\sigma$-algebra $\mathcal{F}_t \subset \mathcal{F}$, $t>0$. We remark that SDEs (\ref{e1}) of It\^o type can be  written equivalently as the following
SDEs of Stratonovich type as
\begin{equation}\label{e2}
 \mathrm{d}X_t=\big(f(X_t)-\frac{1}{2}\sum\limits_{i = 1}^m {D{g_i}(X_t)}\cdot g_i(X_t)\big)\mathrm{d}t+\sum\limits_{i = 1}^m {{g_i}(X_t)\circ \mathrm{d}B^i_t},
\end{equation}
see \cite{cong} for details. In this paper, we deal with local first integrals of SDEs in the frame of It\^o type, and our results obtained for SDEs (\ref{e1}) of It\^o type can be applied to the corresponding SDEs of Stratonovich type (\ref{e2}).

To our knowledge, the definition of first integrals for SDEs of It\^o type can be traced back to Doobko \cite{dubko1978}. Then Kulinich \cite{kulinich2008} considered first integrals for homogeneous stochastic differential equations with jump. For SDEs of Stratonovich type, Misawa \cite{Misawa1994,Misawa1999} studied the relation between first integrals and symmetry of SDEs of Stratonovich type. Hong \emph{et al.} \cite{hong} constructed an equivalent skew gradient form of SDEs and gave some conserved numerical methods when they considered SDEs with a first integral. Similar to ODEs, Zung in \cite{zung} showed that the first integrals of SDEs play a key role in the reduction theory of SDEs. For more details about this topic,
we refer the reader to \cite{M.thieullen1997,cresson,Cresson2007,Zambrini} and the references therein.

Compared to the definition of local first integrals for ODEs, to define the local first integral of SDEs, we replace the time $t$ by a random time $\tau=\tau(\omega)$ of a more
general type called stopping time \cite{bb1989}.
Let $\mathrm{U}$ be an open subset of $\mathbb{C}^n$ and $\tau_{U}$ be the first exit time for a solution $x(t)$ of system (\ref{e1}) to leave $\mathrm{U}$, i.e., $\tau_{\mathrm{U}}=\text {inf}\{t>0, X_t\notin \mathrm{U} \}$.
The following facts on stopping time are well known.
\begin{itemize}
\item For any fixed $t>0$, $\tau(\omega)= t$ is trivially a stopping time.

\item  The first exit time  $\tau_{\mathrm{U}}=\text {inf}\{t>0, X_t\notin \mathrm{U} \}$ is a stopping time.

\item  Let $\tau$ and $\sigma$ be two stopping times. Then $\tau\wedge\sigma:=\mathrm{min}\{\tau,\sigma\}$, $\tau\vee\sigma=\mathrm{max}\{\tau,\sigma\}$ are also stopping times.
\end{itemize}

\begin{definition}\label{deff1}
A non-constant function $\Phi(x):\mathrm{U}\rightarrow \mathbb{C}$ is called a strong first integral of $(\ref{e1})$ if for any solution $X_t$ of $(\ref{e1})$ with the deterministic initial value condition $X_0=x_0\in \mathrm{U}$, we have
\begin{align}
\mathbb{P}\big(\Phi(X_{\tau\wedge \tau_\mathrm{U}})=\Phi(x_0) \big)=1,~~\text{\rm for any stopping time}~~\tau.\label{def1}
\end{align}
\end{definition}
\begin{definition}\label{deff2}
A non-constant function $\Psi(x):\mathrm{U}\rightarrow \mathbb{C}$ is called a weak first integral of $(\ref{e1})$ if
for any solution $X_t$ of $(\ref{e1})$ with the deterministic initial value condition $X_0=x_0\in \mathrm{U}$, we have
\begin{align}
\mathbb{E}\big(\Psi(X_{\tau\wedge \tau_\mathrm{U}})\big)=x_0, ~~\text{\rm for any stopping time}~~\tau.\label{def2}
\end{align}
\end{definition}
\begin{remark}
By definitions \ref{deff1},\ref{deff2}, we can see easily that a strong first integral of SDEs $(\ref{e1})$ must be a weak first integral. In practical application, many stochastic models do not
admit any strong first integrals but have a weak first integral, for example see Example 1 in Sect.3.
\end{remark}

\begin{remark}
When noises $g_i$ is turned off, i.e., $g_i\equiv0$, the above definitions coincide with
the first integral in the deterministic case.
\end{remark}

In the present paper, we first give an equivalent characterization of strong(weak) first integrals for SDEs (\ref{e1}), which is more convenient to check whether given functions are strong(weak) first integrals for (\ref{e1}). In fact, we show that a strong first integral of (\ref{e1}) should be a common first integral of several ODEs and a weak first integral should satisfy a partial differential equation, which is known as the characteristic operator of (\ref{e1}). In addition, we try to get some necessary conditions of the existence of strong first integrals and weak first integrals of (\ref{e1}), respectively, which can be regarded as
a criteria for the non-existence of first integrals
for SDEs (\ref{e1}). For strong first integrals, the corresponding
result is straightforward by using some results of deterministic differential equations. For weak first integrals, under certain assumptions, a generalization of Poincar\'e non-integrability theorem is proposed via resonance conditions. Then, observing that many stochastic systems emerge
from a deterministic system by a stochastic perturbation, we also consider two special cases for which the stochastic terms of SDEs (\ref{e1}) is a higher order perturbation and a linear perturbation, respectively.

This paper is organized as follows. In Sect.2, we provide some analytic  characterizations of both strong first integrals and weak first integrals for SDEs (\ref{e1}), and give the necessary conditions for SDEs to have strong first integrals or weak first integrals, respectively.
 Some examples are given in Sect.3 to demonstrate our results. Finally, conclusions and
discussion of future directions are given in Sect.4.

\section{The main results}

Firstly, we provide the analytic characterization of local first integrals for stochastic differential equations.
\begin{thm}\label{the1}
Let $\Phi(x):\mathrm{U}\rightarrow \mathbb{C}$ be a twice continuous differentiable function. Then the following statements hold.

{\bf(a)} $\Phi(x)$ is a strong first integral of (\ref{e1}) if and only if
\begin{equation}\label{eqv1}
\langle\nabla\Phi(x), f(x)-\frac{1}{2}\sum\limits_{i = 1}^m {D{g_i}(x)\cdot g_i}\rangle\equiv 0,
\end{equation}
\begin{equation}\label{eqv22}
\langle\nabla\Phi(x),  g_i(x)\rangle\equiv0,~~~~\text{for}~~i=1,\cdots,m,
\end{equation}
where $\nabla\Phi$ is the gradient of $\Phi(x)$.

{\bf(b)} $\Phi(x)$ is a weak first integral of (\ref{e1}) if and only if
\begin{equation}\label{eqv3}
\langle\nabla\Phi(x), f(x)\rangle+\frac{1}{2}\sum\limits_{i = 1}^m {g_i^{T}(x)\nabla^2\Phi(x) g_i(x)}\equiv 0,
\end{equation}
where $\nabla^2\Phi(x)=\nabla\cdot\nabla^{T}\Phi(x)$ is the Hessian matrix of $\Phi(x)$.
\end{thm}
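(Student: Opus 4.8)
The plan is to prove both characterizations using It\^o's formula, which converts the behavior of $\Phi(X_t)$ along solutions into differential conditions on $\Phi$. For part (b), the key observation is that $\tau=t$ is itself a stopping time (as recalled in the list of facts above), so the weak first integral condition $\mathbb{E}\big(\Psi(X_{t\wedge\tau_{\mathrm U}})\big)=x_0$ must hold for every fixed $t$, not merely in the limit. Applying It\^o's formula to $\Phi(X_t)$ for the It\^o system (\ref{e1}) yields
\begin{equation}\label{itoexp}
\Phi(X_{t})-\Phi(x_0)=\int_0^{t}\Big(\langle\nabla\Phi(X_s),f(X_s)\rangle+\tfrac{1}{2}\sum_{i=1}^m g_i^{T}(X_s)\nabla^2\Phi(X_s)g_i(X_s)\Big)\mathrm{d}s+\int_0^{t}\sum_{i=1}^m\langle\nabla\Phi(X_s),g_i(X_s)\rangle\,\mathrm{d}B^i_s.
\end{equation}

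For part (b) I would first localize: replacing $t$ by $t\wedge\tau_{\mathrm U}$ confines $X_s$ to the open set $\mathrm{U}$ where $\nabla\Phi$ and $g_i$ are bounded, so the stochastic integral is a genuine martingale with zero expectation. Taking expectations in (\ref{itoexp}) then kills the $\mathrm{d}B^i_s$ term and leaves $\mathbb{E}\big(\Phi(X_{t\wedge\tau_{\mathrm U}})\big)-\Phi(x_0)=\mathbb{E}\int_0^{t\wedge\tau_{\mathrm U}}\mathcal{L}\Phi(X_s)\,\mathrm{d}s$, where $\mathcal{L}\Phi$ denotes precisely the left-hand side of (\ref{eqv3}). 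The ``if'' direction is then immediate: if $\mathcal{L}\Phi\equiv0$ the integral vanishes identically. For the ``only if'' direction, I would differentiate the expectation identity in $t$ at $t=0^+$; since $X_0=x_0$ is deterministic and $\mathcal{L}\Phi$ is continuous, the derivative equals $\mathcal{L}\Phi(x_0)$, and because $x_0\in\mathrm{U}$ is arbitrary this forces $\mathcal{L}\Phi\equiv0$ throughout $\mathrm{U}$.

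For part (a) I would argue that the strong first integral condition (\ref{def1}) forces \emph{both} integrands in (\ref{itoexp}) to vanish separately. The cleanest route is to exploit the quadratic variation: if $\Phi(X_{t\wedge\tau_{\mathrm U}})=\Phi(x_0)$ almost surely, then the process $t\mapsto\Phi(X_{t\wedge\tau_{\mathrm U}})$ is constant, hence has zero quadratic variation; but the quadratic variation of the right-hand side of (\ref{itoexp}) comes only from the martingale part and equals $\int_0^{t\wedge\tau_{\mathrm U}}\sum_{i}\langle\nabla\Phi(X_s),g_i(X_s)\rangle^2\,\mathrm{d}s$. Setting this to zero and differentiating at $t=0^+$ gives $\sum_i\langle\nabla\Phi(x_0),g_i(x_0)\rangle^2=0$, and since the terms are (after passing to the relevant real structure) nonnegative this yields (\ref{eqv22}); a direct coordinate argument separating the $m$ independent Wiener processes confirms each term vanishes individually. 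Once the martingale part is gone, the drift integral must itself be constantly zero, and differentiating at $t=0^+$ as before gives the drift condition. Finally I would reconcile the drift condition with the Stratonovich form: the vanishing of (\ref{eqv22}) makes $\sum_i\langle\nabla\Phi,Dg_i\cdot g_i\rangle$ expressible in terms of the already-vanishing quantities, so the It\^o drift condition $\langle\nabla\Phi,f\rangle\equiv0$ is equivalent to the Stratonovich-corrected drift condition (\ref{eqv1}).

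The main obstacle will be the ``only if'' direction of part (a), specifically justifying that condition (\ref{def1}) for \emph{all} stopping times $\tau$ is enough to separate the $m+1$ conditions cleanly. The quadratic-variation argument is the conceptual heart; the delicate point is handling the complex-valued setting, since the paper works over $\mathbb{C}^n$ and $\langle\cdot,\cdot\rangle$ is the (bilinear, not Hermitian) inner product, so ``sum of squares equals zero'' does not immediately give each square zero over $\mathbb{C}$. I would resolve this by expanding along real and imaginary parts of the driving noises and using the independence of the $B^i_t$ to isolate each $\langle\nabla\Phi,g_i\rangle$, or alternatively by choosing the localizing stopping times and initial points judiciously to probe one direction at a time. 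The remaining steps --- the martingale property after localization, the differentiation at $t=0$, and the It\^o--Stratonovich bookkeeping --- are routine once this separation is secured.
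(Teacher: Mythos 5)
Your proposal follows essentially the same route as the paper: apply It\^o's formula, kill the martingale part by taking expectations to isolate the generator $\mathcal{L}\Phi$, and then isolate the diffusion integrand --- your quadratic-variation argument is the same computation as the paper's It\^o-isometry step, and your differentiation at $t=0^+$ replaces the paper's contradiction argument with the stopping time $\tau_V$ on a small neighborhood where the integrand keeps a fixed sign; both correctly yield pointwise vanishing, and the complex-valued separation you flag is resolved exactly as you suggest (the paper simply writes $\sum_i|\langle\nabla\Phi,g_i\rangle|^2$ in the isometry). The one point to correct is your final reconciliation step: under (\ref{eqv22}) the It\^o drift condition does \emph{not} reduce to $\langle\nabla\Phi,f\rangle\equiv0$. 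Differentiating (\ref{eqv22}) along $g_i$ gives the identity $g_i^{T}\nabla^2\Phi\, g_i\equiv-\langle\nabla\Phi,Dg_i\cdot g_i\rangle$ (the paper's Lemma \ref{lemma1}), which shows that the generator condition (\ref{eqv3}) is equivalent to the Stratonovich-corrected condition (\ref{eqv1}); neither $\langle\nabla\Phi,f\rangle$ nor $\langle\nabla\Phi,Dg_i\cdot g_i\rangle$ need vanish individually, only the stated combinations do.
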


To prove Theorem \ref{the1}, we need the following Lemma.
\begin{lemma}\label{lemma1}
 Let $\Phi(x):\mathrm{U}\rightarrow \mathbb{C}$ be a twice continuous differentiable function which satisfies $(\ref{eqv22})$. Then $\Phi(x)$ satisfies $(\ref{eqv1})$ if and only if it satisfies $(\ref{eqv3})$.
\end{lemma}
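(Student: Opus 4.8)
The plan is to notice that the two conditions (\ref{eqv1}) and (\ref{eqv3}) share the common summand $\langle\nabla\Phi(x),f(x)\rangle$, so their equivalence reduces to the pointwise identity
\begin{equation*}
\langle\nabla\Phi, Dg_i\cdot g_i\rangle + g_i^{T}\nabla^2\Phi\, g_i = 0, \qquad i=1,\dots,m,
\end{equation*}
which I claim holds \emph{termwise} as a consequence of the single constraint (\ref{eqv22}). Granting this, summing over $i$ with the factor $\tfrac12$ and adding $\langle\nabla\Phi,f\rangle$ turns (\ref{eqv1}) into (\ref{eqv3}) and vice versa, which is exactly the assertion of the lemma.

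To establish the identity, first I would write (\ref{eqv22}) in coordinates as $\sum_{k}\partial_{k}\Phi\, g_i^{k}\equiv 0$ and differentiate this identity with respect to $x_j$. The product rule yields, for every index $j$,
\begin{equation*}
\sum_{k}(\partial_j\partial_k\Phi)\, g_i^{k} + \sum_{k}(\partial_k\Phi)(\partial_j g_i^{k}) = 0.
\end{equation*}
The point is that differentiating the first-order constraint (\ref{eqv22}) brings in precisely the second-order information carried by the Hessian of $\Phi$.

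Next I would contract the last identity with $g_i$, i.e. multiply the $j$-th equation by $g_i^{j}$ and sum over $j$. The first double sum collapses to $\sum_{j,k} g_i^{j}(\partial_j\partial_k\Phi)g_i^{k} = g_i^{T}\nabla^2\Phi\, g_i$, while the second becomes $\sum_{k}(\partial_k\Phi)\sum_{j}(\partial_j g_i^{k})g_i^{j} = \langle\nabla\Phi, Dg_i\cdot g_i\rangle$, using the convention $(Dg_i)_{kj}=\partial_j g_i^{k}$. Adding the two gives the claimed identity for each fixed $i$.

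The computation itself is elementary; the only genuine care needed is the bookkeeping of indices and matching the Jacobian convention so that the contraction really reproduces $Dg_i\cdot g_i$ rather than its transpose—this is the one step where a transpose slip would invalidate the argument. No analytic or probabilistic input is required: the lemma is a pure consequence of the chain rule applied to the algebraic constraint (\ref{eqv22}), which is precisely why it serves as the bridge between the Stratonovich-drift formulation (\ref{eqv1}) and the generator formulation (\ref{eqv3}) in the proof of Theorem \ref{the1}.
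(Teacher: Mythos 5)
Your proof is correct and takes essentially the same route as the paper: differentiate the constraint (\ref{eqv22}) with respect to $x_j$, contract with $g_i^j$, and conclude that $g_i^{T}\nabla^2\Phi\,g_i=-\langle\nabla\Phi,Dg_i\cdot g_i\rangle$, so that (\ref{eqv1}) and (\ref{eqv3}) differ by an identically vanishing quantity. The only cosmetic difference is that you state the identity termwise in $i$ while the paper writes the summed version, but the computation is identical.
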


\begin{proof} The result will be proved if we can show
$$
\sum\limits_{i = 1}^m {g_i^{T}(x)\nabla^2\Phi(x) g_i(x)}\equiv -\langle\nabla\Phi(x),\sum\limits_{i = 1}^m {D{g_i}(x)\cdot g_i}\rangle.
$$
In deed, differentiating both sides of (\ref{eqv22})
with respect to $x_j$ gives
\begin{align}\label{q1}
\sum\limits_{k = 1}^n {\frac{{\partial \Phi }}{{\partial {x_k}}}} \frac{{\partial g_i^k}}{{\partial {x_j}}} + \sum\limits_{k = 1}^n {\frac{{{\partial ^2}\Phi }}{{\partial {x_j}\partial {x_k}}}} g_i^k\equiv0,~~~~~j=1,\cdots,n.
\end{align}
Then, we have
\begin{align*}
\sum\limits_{i = 1}^m {g_i^{T}(x)\nabla^2\Phi(x) g_i(x)}&=\sum\limits_{i = 1}^m {\sum\limits_{k,j = 1}^n {\frac{{{\partial ^2}\Phi }}{{\partial {x_j}\partial x{}_k}}} } g_i^kg_i^j\\
&=\sum\limits_{i = 1}^m {\sum\limits_{j = 1}^n {g_i^j(\sum\limits_{k = 1}^n {\frac{{{\partial ^2}\Phi }}{{\partial {x_j}\partial x{}_k}}g_i^k)} } }\\
&=- \sum\limits_{i = 1}^m {\sum\limits_{j = 1}^n {g_i^j(\sum\limits_{k = 1}^n {\frac{{\partial \Phi }}{{\partial {x_k}}}\frac{{\partial g_i^k}}{{\partial {x_j}}})} } }~~~~\text{using (\ref{q1})}\\
&= - \sum\limits_{i = 1}^m {\sum\limits_{k = 1}^n {\frac{{\partial \Phi }}{{\partial {x_k}}}} } (\sum\limits_{j = 1}^n {\frac{{\partial g_i^k}}{{\partial {x_j}}}} g_i^j)\\
&=-\sum\limits_{i=1}^m{\langle\nabla\Phi(x),D{g_i}(x)\cdot g_i\rangle}\\
&=-\langle\nabla\Phi(x),\sum\limits_{i = 1}^m {D{g_i}(x)\cdot g_i}\rangle.
\end{align*}
This completes the proof of Lemma \ref{lemma1}.
\end{proof}

\noindent\textbf{Proof of Theorem \ref{the1}.}\  Let $X_t$ be the solution of (\ref{e1}) with the deterministic initial value condition $X_0=x_0\in \mathrm{U}$. Due to the It\^o formula, we have
\begin{align}\label{ss1}
\mathrm{d}\Phi\big(X_t\big)=\Big(\langle\nabla\Phi(X_t), f(X_t)\rangle&+\frac{1}{2}\sum\limits_{i = 1}^m {g_i^{T}(X_t)\nabla^2\Phi(X_t) g_i(X_t)}\Big)\mathrm{d}t\notag\\
&+\sum\limits_{i=1}^m{\big\langle \nabla\Phi(X_t), g_i(X_t)\big\rangle \mathrm{d}B_t^i}.
\end{align}
Set
$$
G(x)=\langle\nabla\Phi(x), f(x)\rangle+\frac{1}{2}\sum\limits_{i = 1}^m {g_i^{T}(x)\nabla^2\Phi(x) g_i(x)},
$$
then (\ref{ss1}) can be transformed into an equivalent integral equation
\begin{align}
\Phi(X_t)=\Phi(x_0)+\int_0^t {G({X_t})\mathrm{d}t} +\sum\limits_{i = 1}^m{\int_0^t{\big\langle \nabla\Phi(X_t), g_i(X_t)\big\rangle \mathrm{d}B_t^i}}.\label{i3}
\end{align}

{\bf(a)} It follows from Lemma \ref{lemma1} that we need only to show $\Phi(x)$ is a strong first integral of (\ref{e1}) if and only if (\ref{eqv22}) and (\ref{eqv3}) hold.

If $\Phi(x)$ is a strong first integral of (\ref{e1}), then by definition, we have
\begin{align}\label{gg}
\int_0^{\tau\wedge\tau_U} {G({X_t})\mathrm{d}t} +\sum\limits_{i = 1}^m{\int_0^{\tau\wedge\tau_U}{\big\langle \nabla\Phi(X_t), g_i(X_t)\big\rangle \mathrm{d}B_t^i}}=0,~~\text{almost~everywhere}
\end{align}
for any solution $X_t$ of (\ref{e1}) with the deterministic initial value condition $X_0=x_0\in U$, and any stopping time $\tau$.
Taking
the expectations on both sides of (\ref{gg}),
\begin{align}\label{e0}
\mathbb{E}\left(\int_0^{\tau\wedge\tau_\mathrm{U}} {G({X_t})\mathrm{d}t}\right)=0.
\end{align}

We claim that $G(x)\equiv0$ for $x\in \mathrm{U}$. In fact, suppose $G(x_1)\neq0$ for some $x_1\in \mathrm{U}$,
without loss of generality, we can assume $G(x) > 0$ in a neighborhood $V\subset U$ of $x_1$.
Consider the solution $X_t$ of SDEs (\ref{e1}) with initial value $X_0=x_1$, then $G(X_t)\neq 0$ for $t\in [0, \tau_{V}) \subset[0, \tau_\mathrm{U})$, where $\tau_{V}=\text {inf}\{t>0, X_t\notin V \}$. For stopping time $\tau=\tau_V$, we obtain
$$
\int_0^{\tau_V\wedge\tau_\mathrm{U}} {G({X_t})\mathrm{d}t}\neq0,
$$
which implies a contradiction with (\ref{e0}). Therefore, $G(x)\equiv0$ in $U$, i.e., (\ref{eqv3}) holds and
\begin{align}\label{ee0}
\sum\limits_{i = 1}^m{\int_0^{\tau\wedge\tau_\mathrm{U}}{\big\langle \nabla\Phi(X_t), g_i(X_t)\big\rangle \mathrm{d}B_t^i}}=0,~~\text{almost~~everywhere.}
\end{align}

To prove (\ref{eqv22}), by (\ref{ee0}) and the It\^{o} isometry formula, we get
\begin{align*}
\mathbb{E}\Big(\int_0^{\tau\wedge \tau_\mathrm{U}}{\sum\limits_{i=1}^m{\big|\langle \nabla\Phi(X_t), g_i(X_t)\rangle\big|^2 \mathrm{d}t}}\Big)=\mathbb{E}\Big(\sum\limits_{i = 1}^m{\int_0^{\tau\wedge\tau_U}{\big\langle \nabla\Phi(X_t), g_i(X_t)\big\rangle \mathrm{d}B_t^i}}\Big)^2=0.
\end{align*}
\noindent Then, using the same argument, we see
$$
\sum\limits_{i=1}^m{\big|\langle \nabla\Phi(x), g_i(x)\rangle\big|^2}\equiv0,~~\text{i.e.,}~~\langle\nabla\Phi(x),  g_i(x)\rangle\equiv0,~~~~\text{for}~~i=1,\cdots,m.
$$

If (\ref{eqv22}) and (\ref{eqv3}) hold, then it is straightforward to see that $\Phi(x)$ is a strong first integral
by (\ref{i3}).

{\bf(b)} It follows from (\ref{i3}) that
$$
\mathbb{E}\big(\Phi(X_{\tau\wedge \tau_\mathrm{U}})\big)=\Phi(x_0)+\mathbb{E}\big(\int_0^{\tau\wedge\tau_\mathrm{U}} {G({X_t})\mathrm{d}t}\big),~~\text{for any stopping time}~\tau.
$$
Therefore $\Phi(x)$ is a weak first integral of SDEs (\ref{e1}) if and only if
$$
\mathbb{E}\big(\int_0^{\tau\wedge\tau_\mathrm{U}} {G({X_t})\mathrm{d}t}\big)\equiv0.
$$
The rest of the proof is the same as above.

\begin{remark}
The fact that strong first integrals should be weak first integrals coincides with
the fact that the equations $(\ref{eqv1})$ and $(\ref{eqv22})$ imply
the equation $(\ref{eqv3})$.
\end{remark}
\begin{remark}\label{remark1}
Thanks to Theorem \ref{the1}, the function $\Phi(x)$ is a strong first integral of SDEs $(\ref{e1})$ if and only if it is a common first integral of the following $m+1$ systems of differential equations.
\begin{align}
\frac{\mathrm{d}x}{\mathrm{d}t}&=f(x)-\frac{1}{2}\sum\limits_{i = 1}^m {D{g_i}(x)\cdot g_i(x)},\label{f}\\
\frac{\mathrm{d}x}{\mathrm{d}t}&= g_i(x)~~~~i=1,\cdots,m.\label{g}
\end{align}
\end{remark}

 Now, we investigate the maximum number of functionally independent analytic strong first integrals for SDEs (\ref{e1}). But before that, we introduce some notations. Let $\lambda=(\lambda_1,\cdots,\lambda_n)$ and $\mu^i=(\mu^i_{1},\cdots,\mu^i_{n})$ be
the eigenvalues of the matrices $Df(0)-\frac{1}{2}\sum\limits_{j=1}^m{Dg_i(0)^2}$ and $Dg_i(0)$, respectively.
Set
\begin{align*}
\mathcal{S}_0&=\{\mathbf{k}\in(\mathbb{Z}^+)^n: \langle \lambda, \mathbf{k} \rangle=0,~~\mathbf{k}\neq0,\},\\
\mathcal{S}_i&=\{\mathbf{k}\in(\mathbb{Z}^+)^n: \langle \mu^i, \mathbf{k} \rangle=0,~~\mathbf{k}\neq0\},~~~~i=1,\cdots,m,
\end{align*}
and denote by $s_j$ the rank of the set $\mathcal{S}_j$, i.e., the dimension of the linear space spanned by $\mathcal{S}_j$,
$j=0,1,\cdots,m$.

\begin{thm}\label{the2}
Assume that $f(0)=0$ and $g_i(0)=0,i=1,\cdots,m$. If SDEs $(\ref{e1})$ admits $s\triangleq min\{s_0, s_1, \cdots, s_m\}$ functionally independent analytic first integrals $\Phi^1(x),\cdots,\Phi^s(x)$, then there exists a smooth function $\mathcal{H}$ such that any other nontrivial analytic first integral $\Phi(x)$ of $(\ref{e1})$ must be an analytic function of $\Phi^1(x),\cdots,\Phi^s(x)$, i.e.,
$$
\Phi(x)=\mathcal{H}(\Phi^1(x),...,\Phi^s(x)).
$$
\end{thm}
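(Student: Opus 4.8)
The plan is to translate the statement into the deterministic theory of first integrals, where the quantitative form of Poincar\'e's Theorem~\ref{poincare} applies, and then to convert a counting bound into the desired functional representation. First I would invoke Remark~\ref{remark1}: an analytic (strong) first integral $\Phi$ of (\ref{e1}) is precisely a common analytic first integral of the $m+1$ autonomous systems (\ref{f}) and (\ref{g}). Because $f(0)=0$ and $g_i(0)=0$, each of these systems has the origin as a singularity, and the eigenvalues of the respective linearizations are exactly $\lambda$ for (\ref{f}) and $\mu^i$ for (\ref{g}). The problem is thereby reduced to counting common analytic first integrals of these deterministic systems near a shared equilibrium.

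Next I would recall the quantitative refinement of Theorem~\ref{poincare} available in the deterministic literature (see, e.g., \cite{furta1996,shi2001,zhang2003}): for an analytic system $\dot x=h(x)$ with $h(0)=0$ whose linear part has eigenvalues $\nu$, the number of functionally independent analytic first integrals in a neighborhood of the origin is at most the rank of the positive-integer resonance lattice $\{k\in(\mathbb{Z}^+)^n:\langle\nu,k\rangle=0,\ k\neq0\}$. The underlying mechanism is that the lowest-order homogeneous part of any analytic first integral must itself be a first integral of the linearization and hence, after reduction to the diagonal case, a combination of resonant monomials $x^{k}$ with $\langle\nu,k\rangle=0$; functional independence of the integrals forces the associated resonance vectors to be linearly independent, which caps their number by the lattice rank. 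Applied separately to (\ref{f}) and to each (\ref{g}), this yields at most $s_j$ functionally independent analytic first integrals for the $j$-th system.

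I would then combine the two observations. Since an analytic strong first integral of (\ref{e1}) must be a first integral of \emph{all} $m+1$ systems simultaneously, the number of functionally independent such integrals cannot exceed $\min\{s_0,s_1,\dots,s_m\}=s$. The hypothesis supplies $s$ functionally independent integrals $\Phi^1,\dots,\Phi^s$, so this maximum is attained and, for any further analytic first integral $\Phi$, the family $\{\Phi,\Phi^1,\dots,\Phi^s\}$ must be functionally dependent. Because $\Phi^1,\dots,\Phi^s$ are functionally independent, their Jacobian has rank $s$ on an open dense subset of $\mathrm{U}$ (the complement of a proper analytic set); at any point of this subset, the functional dependence together with the implicit/constant-rank function theorem provides a smooth function $\mathcal{H}$ with $\Phi=\mathcal{H}(\Phi^1,\dots,\Phi^s)$ locally, and analyticity together with connectedness of the domain lets this local relation be continued to a single $\mathcal{H}$.

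The main obstacle is the second step: making rigorous the assertion that functionally independent analytic first integrals of a single singular system correspond to $\mathbb{Q}$-linearly independent positive-integer resonance vectors. This demands a careful bookkeeping of the lowest-order terms of each integral and, when the linear part is not diagonalizable, either a reduction to the diagonalizable case or a substitute argument handling Jordan blocks. A secondary difficulty lies in globalizing the representation $\Phi=\mathcal{H}(\Phi^1,\dots,\Phi^s)$ over all of $\mathrm{U}$, where one must control the locus on which the Jacobian of $(\Phi^1,\dots,\Phi^s)$ loses rank and argue that the locally defined $\mathcal{H}$'s patch consistently.
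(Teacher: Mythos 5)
Your reduction to the deterministic setting via Remark \ref{remark1} is exactly the paper's first move, but after that your route diverges and leaves the essential step unproved. The paper does not argue through a counting bound at all: assuming without loss of generality that the minimum is attained at $j=0$, it observes that $\Phi^1,\dots,\Phi^s$ and $\Phi$ are all analytic first integrals of the \emph{single} deterministic system (\ref{f}) and then invokes Theorem 1 of \cite{zhang2003}, which for that one system already delivers the full representation $\Phi=\mathcal{H}(\Phi^1,\dots,\Phi^s)$ with $\mathcal{H}$ analytic (the final sentence of the paper's proof only checks that $\mathcal{H}(\Phi^1,\dots,\Phi^s)$ is again a strong first integral of the remaining systems (\ref{g})). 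You instead extract only the upper bound on the number of functionally independent integrals from the deterministic literature and then try to recover the representation from ``functional dependence of $\{\Phi,\Phi^1,\dots,\Phi^s\}$ plus the implicit function theorem.'' That is the genuine gap: the implicit/constant-rank argument produces local representations only at points where the Jacobian of $(\Phi^1,\dots,\Phi^s)$ has rank $s$, whereas the theorem asserts a single $\mathcal{H}$ valid in a neighborhood of the origin, where the gradients $\nabla\Phi^j(0)$ typically all vanish (each $\Phi^j$ generally has lowest-order term of degree at least two). Functional dependence of smooth functions does not in general imply that one is representable as a function of the others across such a degeneracy locus; establishing this in the analytic category is precisely the content of Zhang's theorem, which you cite for the counting bound but do not use for the representation. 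The two ``obstacles'' you flag at the end --- the resonance-lattice bookkeeping and the globalization of $\mathcal{H}$ --- are not technicalities to be deferred; together they \emph{are} the theorem.

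A secondary remark: for the representation it is unnecessary to run the resonance bound over all $m+1$ systems; the minimum $s=\min\{s_0,\dots,s_m\}$ only serves to identify which single system among (\ref{f}) and (\ref{g}) to feed into the deterministic representation theorem. The cleanest repair of your argument is to replace your second through fifth steps by a direct citation of the representation form of Theorem 1 in \cite{zhang2003}, applied to the system that realizes $s$.
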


\textbf{Proof.}\ Without loss of generality, we assume $s=s_0$. By Theorem \ref{the1}, we know that $\Phi^1(x),\cdots,\Phi^s(x)$ and $\Phi(x)$
are also analytic first integrals of  system (\ref{f}). Then, for system (\ref{f}), it follows from Theorem 1 in \cite{zhang2003}
that there exists an analytic function $\mathcal{H}$ such that $
\Phi(x)=\mathcal{H}(\Phi^1(x),\cdots,\Phi^s(x)).
$
Obviously, the function $\mathcal{H}(\Phi^1(x),\cdots,\Phi^s(x))$ is also a common analytic first integral of the rest of $m$
systems (\ref{g}), i.e., an analytic strong first integral of (\ref{e1}).

We have the following simple conclusions.
\begin{cor}\label{coro2}
Assume that $f(0)=0$ and $g_i(0)=0,i=1,\cdots,m$. Then the number of functionally independent analytic strong first integrals in a neighborhood of the origin is less than or equal to $s\triangleq min\{s_0, s_1, \cdots, s_m\}$.
\end{cor}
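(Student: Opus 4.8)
The plan is to deduce this bound directly from Theorem \ref{the2} by a short argument by contradiction. Since the asserted bound is $s=\min\{s_0,s_1,\cdots,s_m\}$, I would suppose to the contrary that system (\ref{e1}) admits at least $s+1$ functionally independent analytic strong first integrals in a neighborhood of the origin, say $\Phi^1(x),\cdots,\Phi^{s+1}(x)$, and then aim to contradict their functional independence.

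First I would note that the sub-collection $\Phi^1(x),\cdots,\Phi^s(x)$ consists of exactly $s$ functionally independent analytic strong first integrals, so the hypothesis of Theorem \ref{the2} is met. Applying that theorem to these $s$ integrals yields a smooth function $\mathcal{H}$ with
\begin{equation*}
\Phi^{s+1}(x)=\mathcal{H}\big(\Phi^1(x),\cdots,\Phi^s(x)\big).
\end{equation*}
By the chain rule, $\nabla\Phi^{s+1}(x)$ is then a linear combination of $\nabla\Phi^1(x),\cdots,\nabla\Phi^s(x)$ at every point, so the Jacobian matrix of the map $(\Phi^1,\cdots,\Phi^{s+1})$ has rank at most $s$ throughout the neighborhood. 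This contradicts the functional independence of $\Phi^1(x),\cdots,\Phi^{s+1}(x)$, and the upper bound $s$ follows. The case of fewer than $s$ independent integrals makes the inequality hold trivially, so it suffices to rule out $s+1$ or more, which is exactly what the above does.

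The only point needing care is that Theorem \ref{the2} is conditional: it presupposes the existence of $s$ functionally independent analytic first integrals. In the contradiction hypothesis this requirement is supplied automatically by the first $s$ of the $s+1$ assumed integrals, so the theorem applies verbatim and no genuine obstacle arises. I therefore expect the proof to be essentially immediate once functional independence is phrased through the rank of the gradient map, the relevant verification being the routine chain-rule observation indicated above.
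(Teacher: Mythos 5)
Your argument is correct and is essentially what the paper intends: Corollary \ref{coro2} is presented as a ``simple conclusion'' of Theorem \ref{the2} with no written proof, and your contradiction argument (take $s+1$ functionally independent strong first integrals, apply Theorem \ref{the2} to the first $s$ of them, and contradict independence of the full collection via the chain-rule rank bound on the Jacobian) is the natural way to make that implication explicit. No gaps.
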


\begin{cor}\label{coro}
Assume that $f(0)=0$, $g_i(0)=0,i=1,\cdots,m$. If  the eigenvalues of one of   $m+1$ matrices $Df(0)-\frac{1}{2}\sum\limits_{j=1}^m{Dg_i(0)^2}$ and $Dg_i(0),i=1,\cdots,m$  don't satisfy any $\mathbb{Z}^{+}$-resonant condition, then SDEs $(\ref{e1})$ don't have any analytic strong first integral in a neighborhood of the origin.
\end{cor}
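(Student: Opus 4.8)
The plan is to derive this as an immediate consequence of Corollary \ref{coro2}, which already bounds the number of functionally independent analytic strong first integrals by $s=\min\{s_0,s_1,\cdots,s_m\}$. The key observation is that the non-resonance hypothesis on one of the $m+1$ matrices forces the corresponding rank $s_j$ to be zero, which in turn forces $s=0$, and hence rules out the existence of any nonconstant analytic strong first integral.

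First I would unpack the definition of $s_j$ as the rank (dimension of the linear span) of the resonance set $\mathcal{S}_j$. By hypothesis, the eigenvalues of one of the matrices $Df(0)-\frac{1}{2}\sum_{j=1}^m Dg_i(0)^2$ or $Dg_i(0)$ satisfy no $\mathbb{Z}^+$-resonant condition. Fix that matrix and call the index $j_0$; then the associated set $\mathcal{S}_{j_0}=\{\mathbf{k}\in(\mathbb{Z}^+)^n: \langle \lambda, \mathbf{k}\rangle=0,~\mathbf{k}\neq 0\}$ (with $\lambda$ the relevant eigenvalue vector) is empty by definition of non-resonance. Consequently its rank $s_{j_0}=0$.

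Next I would combine this with the definition of $s$ as a minimum over all the $s_j$. Since one of the summands vanishes, we have $s=\min\{s_0,\cdots,s_m\}\leq s_{j_0}=0$, and as each $s_j\geq 0$ trivially, it follows that $s=0$. Applying Corollary \ref{coro2}, the number of functionally independent analytic strong first integrals in a neighborhood of the origin is at most $s=0$. A nonconstant analytic strong first integral would constitute one functionally independent such integral, contradicting this bound. Therefore no analytic strong first integral exists near the origin, which is the claim.

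I do not anticipate a genuine obstacle here, since the result is essentially a specialization of Corollary \ref{coro2}; the only point requiring a moment of care is the translation between the analytic (algebraic) statement ``no $\mathbb{Z}^+$-resonant condition'' and the linear-algebraic statement ``the span of the resonance lattice points is trivial,'' i.e., verifying that the emptiness of $\mathcal{S}_{j_0}$ indeed yields $s_{j_0}=0$ rather than some positive value. One should also note in passing that the hypotheses $f(0)=0$ and $g_i(0)=0$ are exactly what is needed to invoke Corollary \ref{coro2}, so these carry over verbatim.
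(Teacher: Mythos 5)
Your proposal is correct and matches the paper's intent: the paper states Corollary \ref{coro} without proof as a ``simple conclusion,'' and the evident derivation is exactly yours, namely that non-resonance of one matrix makes the corresponding set $\mathcal{S}_{j_0}$ empty, so $s_{j_0}=0$, hence $s=0$, and Corollary \ref{coro2} then forbids any nonconstant analytic strong first integral. (Equivalently one could argue directly from Remark \ref{remark1} and Theorem \ref{poincare} applied to the one non-resonant system among (\ref{f})--(\ref{g}), but this is the same mechanism.)
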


\begin{remark}\label{rr}
Theorem \ref{the2} and Corollary \ref{coro2}, \ref{coro} are also true for the case of rational first integrals or Laurent polynomial first integrals of SDEs $(\ref{e1})$,
the major change being the substitution of $\mathbb{Z}^{+}$-resonant for $\mathbb{Z}$-resonant, for more details see \cite{shi2007,shi2006}.
\end{remark}

In what follows, we turn to consider the non-existence of weak first integrals of SDEs (\ref{e1}).
 Let the following hypothesis hold:

${\mathbf(\rm{H}1)}$ The matrices $Df(0)$, $Dg_1(0),\cdots, Dg_m(0)$ are simultaneous diagonalizable, namely, there exists a nonsingular matrix $T$ such that
\begin{align*}
T^{-1}Df(0)T&\triangleq\Lambda_0=\mathrm{diag}(\mu^0_1,\cdots,\mu^0_n),\\
T^{-1}Dg_i(0)T&\triangleq\Lambda_i=\mathrm{diag}(\mu^i_{1},\cdots,\mu^i_{n})~~~~\text{for}~~i=1,\cdots,m.
\end{align*}

Under the transformation $X_t=\varepsilon TY_t$, by It\^o formula, (\ref{e1}) can be rewritten as
\begin{equation}\label{e4}
\mathrm{d}Y_t=(\Lambda_0 Y_t+\varepsilon f_2(Y_t,\varepsilon))\mathrm{d}t+\sum\limits_{i = 1}^m {(\Lambda_i Y_t+\varepsilon g_{i2}(Y_t,\varepsilon))\mathrm{d}B^i_t},
\end{equation}
where $f_2(y,\epsilon)=O(\left| y \right|^2)), g_i(y,\varepsilon)=O(\left| y \right|^2)), i=1,\cdots,m$. Obviously, $\mu^0=(\mu^0_1,\cdots$, $\mu^0_n)$ and $\mu^i=(\mu^i_{1},\cdots,\mu^i_{n})$ are the eigenvalues of $Df(0)$ and $Dg_i(0)$, respectively.
Set $
\lambda_i=\mu^0_i-\frac{1}{2}\sum\limits_{j=1}^m{(\mu^i_j)^2}, i=1,\cdots,m.
$
Then $\lambda=(\lambda_1,\cdots,\lambda_n)$ are the eigenvalues of $Df(0)-\frac{1}{2}\sum\limits_{j=1}^m{Dg_i(0)^2}$.

\begin{thm}\label{the3}
Assume $(\rm{H}1)$ holds and $f(0)=0$, $g_i(0)=0,i=1\cdots,m$. If
\begin{align}
  \langle \lambda, \mathbf{k}\rangle+\frac{1}{2} \sum\limits_{i = 1}^m{|\langle \mu^i, \mathbf{k}\rangle|^2}\neq0
\end{align}
for any $\mathbf{k}\in (\mathbb{Z^+})^n\backslash \{\mathbf{0}\}$, then SDEs $(\ref{e1})$ don't admit any analytic weak first integral in a neighborhood of the origin.
\end{thm}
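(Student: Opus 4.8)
The plan is to combine the analytic characterization of weak first integrals from Theorem~\ref{the1}(b) with a Poincar\'e-type normal-form argument, the new ingredient being that the It\^o (second-order) part of the characteristic operator contributes a quadratic term to the relevant eigenvalue. First I would use hypothesis $(\mathrm{H}1)$ to pass to the diagonalizing coordinates: under the linear change $x=Ty$ the characteristic operator transforms covariantly, so that if $\Phi(x)$ solves the defining equation $\langle\nabla\Phi,f\rangle+\frac12\sum_i g_i^{T}\nabla^2\Phi\, g_i\equiv 0$ of Theorem~\ref{the1}(b), then $\widetilde\Phi(y):=\Phi(Ty)$ is a weak first integral of the transformed system~(\ref{e4}), whose linear parts are exactly $\Lambda_0 y$ and $\Lambda_i y$. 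Hence it suffices to prove the non-existence statement for~(\ref{e4}). I would then argue by contradiction: assume a non-constant analytic weak first integral $\Phi$ exists near $0$ and expand it into homogeneous parts $\Phi=\Phi_N+\Phi_{N+1}+\cdots$, where $\Phi_N\not\equiv 0$ is the lowest-order term and $N\ge 1$.

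Next I would split the characteristic operator $\mathcal{L}$ of~(\ref{e4}) as $\mathcal{L}=\mathcal{L}_0+\mathcal{R}$, where
$$
\mathcal{L}_0\Phi=\langle \Lambda_0 y,\nabla\Phi\rangle+\frac12\sum_{i=1}^m (\Lambda_i y)^{T}\nabla^2\Phi\,(\Lambda_i y)
$$
collects the contributions of the linear parts of the drift and diffusion, and $\mathcal{R}$ gathers the remaining terms coming from $f_2$, $g_{i2}$ and the mixed products. The key structural observation is that $\mathcal{L}_0$ maps homogeneous polynomials of degree $d$ to homogeneous polynomials of degree $d$ (the operator $y_p\partial_{y_p}$ is degree-preserving, and in the second-order term the two factors $\Lambda_i y$ restore the two degrees removed by $\nabla^2$), whereas $\mathcal{R}$ strictly raises the degree by at least one, since $f_2,g_{i2}=O(|y|^2)$. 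Substituting the expansion into $\mathcal{L}\Phi\equiv 0$ and collecting the terms of lowest degree $N$ therefore yields $\mathcal{L}_0\Phi_N\equiv 0$.

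The heart of the proof is to show that $\mathcal{L}_0$ acts diagonally on monomials. Writing $\Phi_N=\sum_{|\mathbf{k}|=N}c_{\mathbf{k}}\,y^{\mathbf{k}}$, a direct computation using the diagonality of $\Lambda_0,\Lambda_i$ gives $\langle\Lambda_0 y,\nabla y^{\mathbf{k}}\rangle=\langle\mu^0,\mathbf{k}\rangle y^{\mathbf{k}}$ and
$$
(\Lambda_i y)^{T}\nabla^2 y^{\mathbf{k}}\,(\Lambda_i y)=\Big(\langle\mu^i,\mathbf{k}\rangle^2-\sum_{p=1}^n(\mu^i_p)^2 k_p\Big)y^{\mathbf{k}},
$$
where the two contributions arise from separating the $p\ne q$ and $p=q$ entries of the Hessian. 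Summing over $i$ and using $\lambda_p=\mu^0_p-\frac12\sum_i(\mu^i_p)^2$ collapses the part linear in $\mathbf{k}$ into $\langle\lambda,\mathbf{k}\rangle$, so that
$$
\mathcal{L}_0\, y^{\mathbf{k}}=\Big(\langle\lambda,\mathbf{k}\rangle+\tfrac12\sum_{i=1}^m\langle\mu^i,\mathbf{k}\rangle^2\Big)y^{\mathbf{k}}.
$$
Thus $\mathcal{L}_0\Phi_N=\sum_{|\mathbf{k}|=N}c_{\mathbf{k}}\Lambda_{\mathbf{k}}y^{\mathbf{k}}$ with $\Lambda_{\mathbf{k}}$ the bracketed eigenvalue; since the monomials are linearly independent, $\mathcal{L}_0\Phi_N\equiv0$ forces $c_{\mathbf{k}}\Lambda_{\mathbf{k}}=0$ for every $\mathbf{k}$ with $|\mathbf{k}|=N\ge1$. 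By the non-resonance hypothesis each $\Lambda_{\mathbf{k}}\ne0$ (so that, in the real-spectrum case where $|\langle\mu^i,\mathbf{k}\rangle|^2=\langle\mu^i,\mathbf{k}\rangle^2$, the stated condition is exactly the non-vanishing of $\Lambda_{\mathbf{k}}$), whence all $c_{\mathbf{k}}=0$ and $\Phi_N\equiv0$, contradicting the minimality of $N$.

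I expect the main obstacle to be the diagonal-action computation of the second-order part: it is precisely here that hypothesis $(\mathrm{H}1)$ is indispensable, for if the matrices $Dg_i(0)$ were merely triangularizable the quadratic form $(\Lambda_i y)^{T}\nabla^2\Phi\,(\Lambda_i y)$ would couple distinct monomials and $\mathcal{L}_0$ would fail to be diagonal, destroying the balance of lowest-order terms. A secondary point requiring care is the covariance of the characteristic operator under the linear change of variables, which is what guarantees that the reduction to~(\ref{e4}) entails no loss of generality.
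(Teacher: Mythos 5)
Your proposal is correct and follows essentially the same route as the paper: both reduce to the equation $\mathcal{L}_0\Phi_N\equiv 0$ for the lowest-degree homogeneous part and exploit the diagonal action of the linearized characteristic operator on monomials, with eigenvalue $\langle\lambda,\mathbf{k}\rangle+\tfrac12\sum_i\langle\mu^i,\mathbf{k}\rangle^2$; the paper extracts the lowest-order equation via the scaling $X_t=\varepsilon TY_t$ rather than your (equivalent) degree-grading of the operator. Your parenthetical remark that the computation yields $\langle\mu^i,\mathbf{k}\rangle^2$ rather than $|\langle\mu^i,\mathbf{k}\rangle|^2$ is a point the paper glosses over, and your handling of it is the more careful one.
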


\begin{proof}
Assume that (\ref{e1}) has a weak analytic first integral
$$
\Phi(x)=\Phi_l(x)+\Phi_{l+1}(x)+\cdots,
$$
where $l,k\in \mathbb{N}\cup\{0\}$, $\Phi_i(x)$ are homogeneous polynomials of degree $i$, respectively. Without loss of generality, we assume that $\Phi_l(x)$
is not a constant. Obviously,
\begin{align}\label{psi}
\Psi(y,\varepsilon)=\Phi(\varepsilon Ty)=\varepsilon^{l-m}\big(\Phi_l(y)+\varepsilon \Phi_{l+1}(y)+\cdots\big)
\end{align}
is a weak analytic first integral of (\ref{e4}).
By Theorem \ref{the1}, we obtain
\begin{align}\label{eqv8888}
\langle\nabla\Psi(y), \Lambda_0 y+\varepsilon f_2(y,\varepsilon)\rangle+\frac{1}{2}\sum\limits_{i = 1}^m {(\Lambda_i y+\varepsilon g_{i2}(y,\varepsilon))^{T}\cdot\nabla^2\Psi \cdot(\Lambda_i y+\varepsilon g_{i2}(y,\varepsilon))}\equiv 0.
\end{align}
Substituting (\ref{psi}) into (\ref{eqv8888}) and equating the terms of the lowest order with respect to $\varepsilon $, one can get
\begin{align}\label{eqv7}
\langle\nabla\Psi_l(y), \Lambda_0 y\rangle+\frac{1}{2}\sum\limits_{i = 1}^m {(\Lambda_i y)^{T}\cdot\nabla^2\Psi_l \cdot(\Lambda_i y)}\equiv 0.
\end{align}
On the other hand, $\Psi_l(y)$ can be rewrite as a sum of elementary monomials
\begin{align*}
\Psi_l(y)=\sum\limits_{{l_1} +\cdots + {l_n} =l } {{\Psi_{{l_1}\cdots{l_n}}}y_1^{{l_1}}\cdots y_n^{{l_n}}},~~\Psi_{{l_1}...{l_n}}\in\mathbb{C}.
\end{align*}
By simple calculations, we have
\begin{align}
\langle\nabla\Psi_l(y), \Lambda_0 y\rangle&=\sum\limits_{{l_1} + \cdots + {l_n} =l } {{\Psi_{{l_1}\cdots{l_n}}}\left(\sum\limits_{i=1}^n{u^0_i l_i}\right)y_1^{{l_1}}\cdots y_n^{{l_n}}},\label{eqv5}\\
(\Lambda_i y)^{T}\cdot\nabla^2\Psi_l \cdot(\Lambda_i y)
&=\sum\limits_{{l_1} +\cdots+{l_n} = l} {\Psi_{{l_1}\cdots{l_n}}\left((\sum\limits_{j = 1}^n {{l_j}{\mu^i_j})^2 - \sum\limits_{j= 1}^n {{l_j}\mu^i_j\mu^i_j} } \right)} {y^{{l_1}}}{y^{{l_2}}}\cdots{y^{{l_n}}}.\label{eqv6}
\end{align}
Substituting (\ref{eqv5}) and (\ref{eqv6}) into (\ref{eqv7}) yields that
$$
\sum\limits_{{l_1} +\cdots+{l_n} = l} {\Psi_{{l_1}...{l_n}}\left(\sum\limits_{i = 1}^n {{\lambda _i}{l_i}}  + \frac{1}{2}\sum\limits_{i = 1}^m {{\big(\sum\limits_{j = 1}^n {{\mu^i_j}} {l_j}\big)^2}}\right)} {y^{{l_1}}}{y^{{l_2}}}\cdots{y^{{l_n}}}\equiv0.
$$
Now, it is obviously that for any nonzero coefficient $\Psi_{{l_1}\cdots{l_n}}$,
$$
\sum\limits_{i = 1}^n {{\lambda _i}{l_i}}  +\frac{1}{2} \sum\limits_{i = 1}^m {{\big(\sum\limits_{j = 1}^n {{\mu^i_j}} {l_j}\big)^2}}=0,
$$
equivalently,
\begin{align}
  \langle \lambda, \mathbf{l}\rangle+\frac{1}{2} \sum\limits_{i = 1}^m{|\langle \mu^i, \mathbf{l}\rangle|^2}=0,~~\mathbf{l}=(l_1,\cdots,l_n)\neq\mathbf{0},
\end{align}
which is in contradiction to the hypothesis of Theorem \ref{the3}.

\begin{remark}
When all the stochastic terms $g_i(x),i=1,\cdots,m$ vanish, the definition of local first integrals for SDEs $(\ref{e1})$ is consistent with that for ODEs, and Theorem \ref{the1} and Corollary \ref{coro} are reduced to Poincar\'e non-integrability Theorem \ref{poincare}.
In this sense, the above results generalize the classical Poincar\'e non-integrability theorem, Theorem A in \cite{kwek2003}, Theorem 1 in \cite{zhang2003} and Theorem 1 in \cite{cong2011jde}.
\end{remark}

\begin{remark}
Under the assumption $(\rm{H}1)$, we could get more refined results on
the number of functionally independent analytic strong first integrals in Theorem \ref{the1}. More specifically,
if $(\rm{H}1)$ holds, then the number of functionally independent analytic first integrals is less than or equal to the rank of
the set
\begin{align*}
\mathcal{S}=\{\mathbf{k}\in(\mathbb{Z}^+)^n: \langle \lambda, \mathbf{k} \rangle=0, \langle \mu^i, \mathbf{k} \rangle=0,i=1,\cdots,m,~~\mathbf{k}\neq0\},
\end{align*}
 which is obviously equal or lesser than $\mathrm{min}\{s_0, s_1,\cdots,s_m\}$.
\end{remark}

Note that the hypothesis of Theorem \ref{the3} is a little bit strict but suitable for our purpose. In order to facilitate the application, we point out
a special case of Theorem \ref{the3} where the stochastic terms $g_i(x),i=1,\cdots,m$ are higher-order with respect to $x$ in the next result. In this case, $Dg_i(0),i=1,\cdots,m$ are null matrices and diagonalizable, and we show that
Theorem \ref{the3} is also true without the diagonlization assumption of the matrix $Df(0)$.

\begin{thm}\label{the4}
Assume $f(0)=0$, $g_i(x)=O(|x|^2),i=1\cdots,m$. If the eigenvalues of $Df(0)$ do not satisfy any $\mathbb{Z^{+}}$-resonant condition,
then $(\ref{e1})$ doesn't  have any analytic weak first integral in a neighborhood of the origin.
\end{thm}

To prove Theorem \ref{the4}, we need the following result, which is due to Bibikov \cite{bibikov}.
\begin{lemma}\label{bibi}
Denote by $\mathcal{G}^r(\mathbb{C})$ the linear space of $n$-dimensional vector-valued homogeneous
polynomials of degree $r$ in $n$ variables with coefficients in the complex field $\mathbb{C}$. Let $A$ and $B$
be two $n\times n$ matrices with entries in $\mathbb{C}$, and their $n$-tuple of eigenvalues be $\lambda=(\lambda_1,\cdots,\lambda_n)$ and $\kappa=(\kappa_1,\cdots,\kappa_n)$, respectively.
Define a linear operator $L$ on $\mathcal{G}^r(\mathbb{C})$ as follows,
\begin{align*}
L[h](x) =\langle \partial_xh, Ax\rangle-Bh,~~h\in \mathcal{G}^r(\mathbb{C}).
\end{align*}
Then the spectrum of the operator $L$ is
\begin{align*}
\{\langle l,\lambda \rangle-\kappa_j;~~l\in(\mathbb{Z}^+)^n,|l|=r,j=1,\cdots,n\}.
\end{align*}
\end{lemma}
\end{proof}

\noindent{\bf Proof of Theorem \ref{the4}} Supposing (\ref{e1}) admits an analytic weak first integral $\Phi(x)$, by Theorem \ref{the1}, we see $\Phi(x)$ satisfies
\begin{equation}\label{eqv88}
\langle\nabla\Phi(x), f(x)\rangle+\frac{1}{2}\sum\limits_{i = 1}^m {g_i^{T}(x)\nabla^2\Phi(x) g_i(x)}\equiv 0.
\end{equation}
\noindent Expanding $\Phi(x)$ in the formal power series w.r.t $x$, we get
\begin{align}\label{f1}
\Phi(x)=\Phi_k(x)+\Phi_{k+1}(x)+\cdots,
\end{align}
where $k\geq1$, $\Phi_k(x)$ is the lowest order terms of $\Phi(x)$ and $\Phi_i(x)$ is the $i$-th order homogenous terms of $\Phi(x), i = k,k+1,\cdots$. Substitute (\ref{f1}) into (\ref{eqv88}) and equate the term in (\ref{eqv88}) of the lowest order with respect to $x$ yields
\begin{align}\label{lowest}
\langle\nabla\Phi_k(x),  Df(0)x\rangle\equiv0.
\end{align}
Consider the linear operator
$$
L[h](x)=\langle\nabla h(x),  Df(0)x\rangle
$$
on the linear space $\mathcal{H}_k$ of scalar homogeneous polynomials of degree $k$. By Lemma \ref{bibi}, the spectrum
of the operator $L$ over $\mathcal{H}_k$ is
$$
\bigg\{\sum_{i = 1}^n {{l_i}{\lambda _i}}\bigg|\sum_{i = 1}^n {{l_i} = k},l_i\in\mathbb{Z}^+ \bigg\}.
$$
\noindent Since the eigenvalues of the matrix $Df(0)$ do not satisfy any $\mathbb{Z}^+$-resonant condition,
zero does not belong to the spectrum
of the operator $L$ and so $L$ is an inverse operator over $\mathcal{H}_k$. It follows from (\ref{lowest})
that $\Phi_k(x)=0$, which is a contradiction. The proof is completed.

\begin{remark}\label{rrr}
Similar to Remark \ref{rr}, Theorem \ref{the4} are also true for the case of rational first integrals or Laurent polynomial first integrals of SDEs $(\ref{e1})$,
the major change being the substitution of $\mathbb{Z}^{+}$-resonant for $\mathbb{Z}$-resonant.
\end{remark}

At the last of this section, as an application, we turn to consider the effect of the stochastic noise on the integrability of differential equations.
More specifically, under nondegeneracy conditions, we show that for given ODEs, which maybe admit many first integrals, there exists a linear stochastic perturbation such that
the corresponding disturbed SDEs have no any analytic weak first integrals.
\begin{thm}\label{th5}
Let
\begin{align*}
\mathrm{d}x=f(x)\mathrm{d}t
\end{align*}
be an $n$-dimension analytic differential system such that $f(0)=0$ and $\mathrm{det}$$Df(0)\neq0$. Then there exists an $n\times n$ matrix $P$ such that
the disturbed SDEs
\begin{align*}
  \mathrm{d}x=f(x)\mathrm{d}t+Px  \mathrm{d}B_t
\end{align*}
have no any analytic weak first integrals, where $B_t$ is an one-dimensional Brown process.
\end{thm}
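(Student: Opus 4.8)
The plan is to realize the perturbed equation as a special case of the It\^o system (\ref{e1}) with $m=1$ and linear noise $g_1(x)=Px$, and then to take $P$ \emph{scalar}, $P=\beta I$. This choice lets me compute the lowest-order obstruction to a weak first integral completely explicitly, via Lemma \ref{bibi} together with Euler's identity, and in particular it sidesteps the simultaneous-diagonalizability hypothesis $(\rm{H}1)$ needed for Theorem \ref{the3}: since we only assume $\det Df(0)\neq 0$, the matrix $Df(0)$ may carry nontrivial Jordan blocks, so Theorem \ref{the3} cannot be invoked directly.

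First I would argue by contradiction. Suppose the disturbed SDE admits an analytic weak first integral $\Phi$. By Theorem \ref{the1}(b) it satisfies
\[
\langle\nabla\Phi(x),f(x)\rangle+\tfrac{1}{2}(Px)^{T}\nabla^2\Phi(x)(Px)\equiv 0 .
\]
Expanding $\Phi=\Phi_k+\Phi_{k+1}+\cdots$ into homogeneous parts with $\Phi_k\not\equiv 0$ the lowest one ($k\geq 1$) and collecting the homogeneous terms of degree $k$, one gets the lowest-order equation $\mathcal{L}[\Phi_k]\equiv 0$, where on the space $\mathcal{H}_k$ of scalar homogeneous polynomials of degree $k$ I set
\[
\mathcal{L}[h](x)=\langle\nabla h(x),Df(0)x\rangle+\tfrac{1}{2}(Px)^{T}\nabla^2 h(x)(Px).
\]
The goal is then to choose $P$ so that $\mathcal{L}$ is invertible on every $\mathcal{H}_k$, $k\geq 1$; this forces $\Phi_k\equiv 0$, contradicting the minimality of $k$, and hence no weak first integral can exist.

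Taking $P=\beta I$ makes the second summand transparent. For $h\in\mathcal{H}_k$, Euler's identity yields $x^{T}\nabla^2 h(x)\,x=k(k-1)h(x)$, so $\tfrac{1}{2}(Px)^{T}\nabla^2 h(Px)=\tfrac{1}{2}\beta^2 k(k-1)h$; that is, the noise contribution acts as the scalar $\tfrac{1}{2}\beta^2 k(k-1)$ on all of $\mathcal{H}_k$. The first summand $h\mapsto\langle\nabla h,Df(0)x\rangle$ is exactly the operator of Lemma \ref{bibi} with $B=0$, whose spectrum is $\{\langle\mathbf{l},\mu^0\rangle:\mathbf{l}\in(\mathbb{Z}^+)^n,\ |\mathbf{l}|=k\}$, where $\mu^0=(\mu^0_1,\dots,\mu^0_n)$ are the eigenvalues of $Df(0)$; crucially this holds whether or not $Df(0)$ is diagonalizable. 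Consequently the spectrum of $\mathcal{L}$ on $\mathcal{H}_k$ is
\[
\Big\{\langle\mathbf{l},\mu^0\rangle+\tfrac{1}{2}\beta^2 k(k-1)\ :\ \mathbf{l}\in(\mathbb{Z}^+)^n,\ |\mathbf{l}|=k\Big\}.
\]

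It then remains to choose $\beta$ so that $0$ lies in none of these spectra. For $k=1$ the spectrum is $\{\mu^0_1,\dots,\mu^0_n\}$, which excludes $0$ precisely because $\det Df(0)\neq 0$. For each $k\geq 2$ and each $\mathbf{l}$ with $|\mathbf{l}|=k$, the constraint $\langle\mathbf{l},\mu^0\rangle+\tfrac{1}{2}\beta^2 k(k-1)\neq 0$ forbids at most two values of $\beta$ (and forbids $\beta=0$ in the resonant case $\langle\mathbf{l},\mu^0\rangle=0$). Since there are only countably many pairs $(k,\mathbf{l})$, the forbidden set is countable, so I may pick any nonzero $\beta$ in its complement and set $P=\beta I$. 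The main obstacle is exactly the control of the spectrum of $\mathcal{L}$ for a general linear perturbation, where $\mathcal{L}$ need not be diagonalizable and its eigenvalues are hard to track once $Df(0)$ has Jordan blocks; the scalar choice $P=\beta I$ is what resolves this, collapsing the quadratic noise term to a single scalar through Euler's identity and reducing the whole problem to the simultaneous avoidance of countably many resonance equations by one free parameter.
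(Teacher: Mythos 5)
Your proof is correct, and it takes a genuinely different route from the paper's. The paper constructs $P=Q\Lambda_1Q^{-1}$ with $\Lambda_1=\mathrm{diag}(\mu_1,\dots,\mu_n)$, where $Q$ brings $Df(0)$ to Jordan form and the $\mu_i=u^{a_i}$ are powers of an element $u$ transcendental over $\mathbb{Q}(\lambda_1,\dots,\lambda_n)$; the exponents $a_i$ are rigged so that the quantities $\mu_i^2$ and $\mu_i\mu_j$ are pairwise distinct powers of $u$, which forces the resonance polynomial of Lemma \ref{lemma2} to split into the separate conditions $l_i(l_i-1)=0$, $l_il_j=0$, $\sum\lambda_il_i=0$, and these are then incompatible with $\det Df(0)\neq0$. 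The paper must additionally handle the Jordan blocks of $Df(0)$ by a second change of variables $y=Cz$ with an $\varepsilon$-scaling before reading off monomial coefficients. Your choice $P=\beta I$ sidesteps both devices: Euler's identity collapses the noise term to the scalar $\tfrac12\beta^2k(k-1)$ on all of $\mathcal{H}_k$, so the lowest-order operator is $L+\tfrac12\beta^2k(k-1)\,\mathrm{Id}$ and its spectrum is known from Lemma \ref{bibi} exactly as in the proof of Theorem \ref{the4}, with no diagonalizability assumption and no monomial bookkeeping; a countability argument then supplies $\beta$. (Your reduction to $\mathcal{L}[\Phi_k]\equiv0$ is sound: with linear noise the quadratic term does contribute at degree $k$, and you account for it; and the case $k=1$ is exactly where $\det Df(0)\neq0$ is used, since $k(k-1)=0$ there.) What each approach buys: yours is shorter, works verbatim for non-diagonalizable $Df(0)$, and in fact shows that \emph{all but countably many} scalar perturbations $\beta I$ destroy weak integrability, which is arguably a stronger and more quotable conclusion; the paper's transcendence construction produces a single explicit algebraic mechanism (distinct powers of $u$) that decouples the resonance equation and would also rule out resonances for every multi-index simultaneously without a parameter count, at the cost of the field-extension machinery and the $\varepsilon$-scaling of the Jordan blocks. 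Both choices of $\beta$ (respectively $u$) are ultimately non-constructive, though your forbidden set $\{\beta^2=-2\langle\mathbf{l},\mu^0\rangle/(k(k-1))\}$ is explicit enough to be checked for a concrete $Df(0)$.
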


To prove Theorem \ref{th5}, we need the following lemma.
\begin{lemma}\label{lemma2}
If $\lambda_i\in\mathbb{C}, i=1,\cdots,n$ are all different from zero, then there exists $\mu_i\in\mathbb{C}, i=1,\cdots,n$ such that the  algebraic  equation with respect to ${\bf l}=(l_1,\cdots,l_n)$
\begin{align}\label{eeee}
2\sum\limits_{i=1}^n{\lambda_i l_i}+\sum\limits_{i=1}^n{l_i(l_i-1)u_i^2}+\sum\limits_{1\leq i\neq j\leq n}{l_il_j\mu_i\mu_j}=0
\end{align}
has no non-trivial solutions ${\bf l}\in(\mathbb{Z^{+}})^n/\{0\}$.
\end{lemma}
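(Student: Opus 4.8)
The plan is to reduce the quadratic form on the left-hand side of (\ref{eeee}) to a transparent shape and then make a generic choice of the parameters $\mu_i$. First I would record the algebraic identity that collapses the two quadratic pieces: setting $\langle \mu, \bl\rangle = \sum_{i=1}^n \mu_i l_i$, one has
\[
\sum_{i=1}^n l_i(l_i-1)\mu_i^2 + \sum_{1\le i\ne j\le n} l_i l_j \mu_i\mu_j = \Big(\sum_{i=1}^n \mu_i l_i\Big)^2 - \sum_{i=1}^n l_i \mu_i^2,
\]
since $\big(\sum_i \mu_i l_i\big)^2 = \sum_i \mu_i^2 l_i^2 + \sum_{i\ne j}\mu_i\mu_j l_i l_j$ while $\sum_i l_i(l_i-1)\mu_i^2 = \sum_i \mu_i^2 l_i^2 - \sum_i \mu_i^2 l_i$. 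Consequently the left-hand side of (\ref{eeee}) equals
\[
E(\bl) := 2\sum_{i=1}^n \lambda_i l_i - \sum_{i=1}^n \mu_i^2 l_i + \Big(\sum_{i=1}^n \mu_i l_i\Big)^2 .
\]

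Next I would exploit the freedom in the choice of the $\mu_i$ by taking them all equal, $\mu_1=\cdots=\mu_n=\mu$. Then $\sum_i \mu_i^2 l_i = \mu^2|\bl|$ and $\big(\sum_i \mu_i l_i\big)^2 = \mu^2|\bl|^2$, where $|\bl| = l_1+\cdots+l_n$, so the whole expression collapses to
\[
E(\bl) = 2\langle \lambda, \bl\rangle + \mu^2\,|\bl|\,(|\bl|-1).
\]
When $|\bl|=1$, say $\bl$ is the $j$-th unit multi-index, this reads $E(\bl)=2\lambda_j\ne 0$ by the hypothesis $\lambda_j\ne 0$; hence the first-order terms never vanish, for any value of $\mu$, and impose no restriction.

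It remains to treat $|\bl|\ge 2$, where $|\bl|(|\bl|-1)\ge 2$, so that $E(\bl)=0$ is equivalent to
\[
\mu^2 = -\,\frac{2\langle \lambda, \bl\rangle}{|\bl|\,(|\bl|-1)} .
\]
As $\bl$ ranges over the countable set $\{\bl\in(\mathbb{Z}^+)^n : |\bl|\ge 2\}$, the right-hand side takes at most countably many values, forming a set $\mathcal{B}\subset\mathbb{C}$. Since $\mathbb{C}$ is uncountable, I can choose $\mu\in\mathbb{C}$ with $\mu^2\notin\mathcal{B}$; for any such $\mu$ one has $E(\bl)\ne 0$ for every $\bl\in(\mathbb{Z}^+)^n\setminus\{0\}$, which is precisely the assertion of the lemma.

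The argument is immediate once the form is simplified, so I do not anticipate a serious obstacle; the only steps needing care are the identity that collapses the cross terms and the separation of the degree-one multi-indices (which are controlled purely by $\lambda_j\ne 0$ and leave $\mu$ unconstrained), after which the countability/genericity argument closes the proof. I would also note that equality of the $\mu_i$ is not essential: for a general vector $\mu=(\mu_1,\cdots,\mu_n)$ each equation $E(\bl)=0$ with $|\bl|\ge 2$ defines a proper algebraic subset of $\mathbb{C}^n$ (the left-hand side is a nonconstant polynomial in $\mu$), and a countable union of such subsets cannot cover $\mathbb{C}^n$; thus a generic $\mu$ also works and affords more freedom in the perturbation matrix $P$ used in Theorem \ref{th5}.
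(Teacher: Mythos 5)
Your proof is correct, but it proceeds quite differently from the paper's. You first collapse the quadratic part via the identity $\sum_i l_i(l_i-1)\mu_i^2+\sum_{i\ne j}l_il_j\mu_i\mu_j=\big(\sum_i\mu_il_i\big)^2-\sum_il_i\mu_i^2$, specialize to $\mu_1=\cdots=\mu_n=\mu$ so that the left-hand side of (\ref{eeee}) becomes $2\langle\lambda,\mathbf{l}\rangle+\mu^2|\mathbf{l}|(|\mathbf{l}|-1)$, dispose of $|\mathbf{l}|=1$ using $\lambda_j\ne0$, and then choose $\mu^2$ outside the countable set $\{-2\langle\lambda,\mathbf{l}\rangle/(|\mathbf{l}|(|\mathbf{l}|-1)):|\mathbf{l}|\ge2\}$; all steps check out, including the observation that for $|\mathbf{l}|\ge2$ the quadratic form in $\mu$ is a nonzero polynomial, so the general-$\mu$ genericity variant also works. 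The paper instead adjoins a transcendental element $u$ over $\mathbb{Q}(\lambda_1,\dots,\lambda_n)$ and sets $\mu_i=u^{a_i}$ with exponents chosen so that the monomials $\mu_i^2$ and $\mu_i\mu_j$ are pairwise distinct powers of $u$; linear independence of these powers forces $l_i(l_i-1)=0$ and $l_il_j=0$ separately, leaving only the unit multi-indices, which are excluded by $\lambda_i\ne0$. Your route is more elementary (no transcendence-degree input) and yields the notably concrete perturbation $P=\mu I$ in Theorem \ref{th5}, whereas the paper's construction pins down an explicit algebraically structured choice of the $\mu_i$ and extracts the individual coefficient equations (\ref{s2})--(\ref{s1}), which is the mechanism it reuses to rule out resonances term by term. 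Either argument suffices for the lemma as stated.
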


\begin{proof} Consider a field extension $M=\mathbb{Q}(\lambda_1,\cdots,\lambda_n)$ of rational number field $\mathbb{Q}$. Then since the transcendence degree of $\mathbb{C}$ over $\mathbb{Q}$ is the cardinality of the continuum, we see the transcendence degree of $\mathbb{C}$
over $M$ is also the cardinality of the continuum. So we may take $u$ as a transcendental element over $M$, and let $\mu_i=u^{a_i}, i=1, \cdots,m,$ where
$$
a_1=1, a_{k}=2(a_1+a_2+\cdots+a_{k-1}),~~k=2,\cdots,n.
$$
It is easy to check the elements $\mu_i^2=u^{2a_i}$, $\mu_i\mu_j=u^{a_i+a_j}$ are different from each other.

Assume $(l_1,\cdots,l_n)$ satisfies (\ref{eeee}) with $\mu_i=u^{a_i}$.
We will show that $l_1=\cdots=l_n=0$. In fact, by (\ref{eeee}), we obtain
\begin{align}
l_i(l_i-1)&=0,~~~~1\leq i\leq n\label{s2}\\
l_il_j&=0,~~~~1\leq i\neq j\leq n,\label{s3}\\
2\sum\limits_{i=1}^n{\lambda_i l_i}&=0.\label{s1}
\end{align}
By (\ref{s2}) and (\ref{s3}), we must have either all $l_i,i=1\cdots,n$ are zero, or one of them is $1$ and others are zero.
For the latter case, it follows
from (\ref{s1}) that one of $\lambda_i$ vanishes, which is inconsistent with the assumed conditions of $\prod\limits_{i = 1}^n {{\lambda _i}}\neq0$.
\end{proof}

\noindent\textbf{Proof of Theorem \ref{th5}.}\
Let $Q$ be a nonsingular matrix such that $Q^{-1}Df(0)Q = \Lambda$ is a Jordan canonical form, and $\mu=(\mu_1,\cdots,\mu_n)$ such that
(\ref{eeee}) has no any positive integral solution. We claim that
\begin{align}\label{sp}
\mathrm{d}x=f(x)\mathrm{d}t+Q\Lambda_1Q^{-1} x\mathrm{d}B_t
\end{align}
have no any analytic weak first integral, where $\Lambda_1=\mathrm{diag}(\mu_1,\cdots,\mu_n)$.

Suppose (\ref{sp}) has an analytic weak first integral $\Phi(x)$. Using the same argument as in the proof of Theorem \ref{the4}, we can easily carry out that the lowest homogeneous polynomial $\Phi_l(x)$ of $\Phi(x)$ satisfies
\begin{align}\label{sp2}
\langle\nabla_x\Phi_l, Df(0)x\rangle+\frac{1}{2}(\Lambda_1Q^{-1}x)^{T}\cdot\nabla_x^2\Phi_l \cdot(\Lambda_1Q^{-1}x)\equiv 0.
\end{align}
Under the change of the variables $x=Qy$, we have
\begin{align}\label{loow}
\nabla^2_{y}\Psi_l(y)=Q^{T}\nabla^2_{x}\Phi(x)Q,~~\nabla_{y}\Psi_l(y)=Q^T\nabla_{x}\Phi(x),
\end{align}
where $\Psi_l(y)=\Phi_l(Qy)$, and $\nabla^2_{y}\Psi$ and $\nabla^2_{x}\Phi$ are the Hessian matrix of $\Phi$ about $y$ and $x$, respectively.
Then, (\ref{sp2}) becomes
\begin{align}\label{sp3}
\langle\nabla_y\Psi_l, \Lambda y\rangle+\frac{1}{2}(\Lambda_1 y)^{T}\cdot\nabla_y^2\Psi_l \cdot(\Lambda_1y)\equiv 0.
\end{align}
Set
$$
\Lambda  = \left( {\begin{array}{*{20}{c}}
{{J_1}}&{}&{}&{}\\
{}&{J{}_2}&{}&{}\\
{}&{}& \ddots &{}\\
{}&{}&{}&{{J_m}}
\end{array}} \right),~~~
{J_r} = \left( {\begin{array}{*{20}{c}}
{{\lambda _r}}&1&{}&{}\\
{}&{{\lambda _r}}& \ddots &{}\\
{}&{}& \ddots &1\\
{}&{}&{}&{{\lambda _r}}
\end{array}} \right),
$$
where $J_r$ is a Jordon block with degree $i_r$.
Under the transformation of $y=Cz$ with
$$
C  = \left( {\begin{array}{*{20}{c}}
{{C_1}}&{}&{}&{}\\
{}&{C_2}&{}&{}\\
{}&{}& \ddots &{}\\
{}&{}&{}&{{C_m}}
\end{array}} \right),~~~
{C_r} = \left( {\begin{array}{*{20}{c}}
1&{}&{}&{}\\
{}&\varepsilon&{}&{}\\
{}&{}& \ddots &{}\\
{}&{}&{}&{{\varepsilon^{{i_r} - 1}}}
\end{array}} \right),
$$
we can rewrite (\ref{sp3}) as
\begin{align}\label{sp4}
\langle\nabla_{z}\Psi_l(Cz), C^{-1}\Lambda Cz\rangle+\frac{1}{2}(C^{-1}\Lambda_1 Cz)^{T}\cdot\nabla_{z}^2\Psi_l(Cz) \cdot(C^{-1}\Lambda_1 Cz)\equiv 0.
\end{align}

Since $\Psi_l(y)$ is a non-constant homogeneous polynomial of degree $l$ with respect to $y$, $\Psi_l(Cz)$ can be rewritten as
\begin{align}\label{lowest3}
\Psi_l(Cz)=\Psi^0_l(z)+\varepsilon\Psi^1_l(z)+\cdots+\varepsilon^{\tilde l}\Psi^{\tilde l}_l(z),
\end{align}
where $\Psi^i_l(z)$ are homogeneous polynomial of degree $l$ with respect to $z$. Furthermore, we have
\begin{align}\label{lowest4}
C^{-1}\Lambda_1 C=\Lambda_1,~~C^{-1}\Lambda C=B+\varepsilon \tilde B
\end{align}
with
\begin{align*}
B& = \left( {\begin{array}{*{20}{c}}
{{B_1}}&{}&{}&{}\\
{}&{{B_2}}&{}&{}\\
{}&{}& \ddots &{}\\
{}&{}&{}&{B{}_m}
\end{array}} \right),~~~~
{B_r} = \left( {\begin{array}{*{20}{c}}
{{\lambda _r}}&{}&{}&{}\\
{}&{{\lambda _r}}&{}&{}\\
{}&{}& \ddots &{}\\
{}&{}&{}&{{\lambda _r}}
\end{array}} \right),\\
\tilde B &= \left( {\begin{array}{*{20}{c}}
{{\tilde B_1}}&{}&{}&{}\\
{}&{{\tilde B_2}}&{}&{}\\
{}&{}& \ddots &{}\\
{}&{}&{}&{\tilde B{}_m}
\end{array}} \right),~~~~
{\tilde B_r} = \left( {\begin{array}{*{20}{c}}
0&1&{}&{}\\
{}&0& \ddots &{}\\
{}&{}& \ddots &1\\
{}&{}&{}&0
\end{array}} \right)
\end{align*}
Substituting (\ref{lowest3}) and (\ref{lowest4}) into (\ref{sp4}) and equating the terms of the lowest order with respect to $\varepsilon$ lead to
\begin{align}\label{sp8}
\langle\nabla_{z}\Psi_l^0(z),  Bz\rangle+\frac{1}{2}(\Lambda_1 z)^{T}\cdot\nabla_{z}^2\Psi_l^0(z) \cdot(\Lambda_1 z)\equiv 0.
\end{align}
In addition, expand $\Psi_l(z)$ as
\begin{align*}
\Psi^0_l(z)=\sum\limits_{{l_1} + ... + {l_n} =l } {{\Psi^0_{{l_1}...{l_n}}}z_1^{{l_1}}...z_n^{{l_n}}},~~\Psi_{{l_1}...{l_n}}\in\mathbb{C}.
\end{align*}
Then (\ref{sp8}) becomes
\begin{align}
\sum\limits_{{l_1} + ... + {l_n} =l } {{\Psi^0_{{l_1}...{l_n}}}\left(2\sum\limits_{i=1}^n{\lambda_i l_i}+\sum\limits_{i=1}^n{l_i(l_i-1)u_i^2}+\sum\limits_{1\leq i\neq j\leq n}{l_il_j\mu_i\mu_j}\right)z_1^{{l_1}}...z_n^{{l_n}}}\equiv0.
\end{align}
Since
$$
2\sum\limits_{i=1}^n{\lambda_i l_i}+\sum\limits_{i=1}^n{l_i(l_i-1)u_i^2}+\sum\limits_{1\leq i\neq j\leq n}{l_il_j\mu_i\mu_j}\neq0
$$
for any $l=(l_1,\cdots,l_n)\in (\mathbb{Z}^+)^n/\{0\}$, we obtain $\Psi^0_{{l_1}...{l_n}}=0$, which means $\Phi^0_l(z)$ is zero. This contradicts the fact $\Psi^0_l(z)$ is not a constant.

\section{Examples}\label{sec3}
In this section we give some examples to illustrate our
results.
\\
\noindent{\bf Example 1}
Consider the Sharma-Parthasarathy stochastic two-body equations
\begin{equation}\label{exa1}
 \left\{
  \begin{aligned}
   \mathrm{d}r=&v\mathrm{d}t\\
   \mathrm{d}\phi=&w\mathrm{d}t,\\
   \mathrm{d}v=&(rw^2-\frac{k}{mr^2})dt+r\sigma_{r}\mathrm{d}B_t^r,\\
   \mathrm{d}w=&-\frac{2vw}{r}\mathrm{d}t+\frac{\sigma_{\phi}}{r}\mathrm{d}B_t^{\phi},
  \end{aligned}
 \right.
\end{equation}
where $k,\sigma_{r},\sigma_{\phi}$ are positive constants, and $B_t^r, dB_t^{\phi}$ are two independent Brownian motions. This equations was introduced in \cite{exa1}
to model the force induced by a cloud having a density which fluctuates stochastically.
The observational data in \cite{exa11} about the zodiacal dust around the sun strongly support the validity of the model.

In this model, we denote by $x=(r,\phi,v,w)^T$, $f(x)=(v,w,rw^2-\frac{k}{mr^2},-\frac{2vw}{r})^T$, $g_1(x)=(0,0,r\sigma_{r},0)^T$ and $g_2(x)=(0,0,0,\frac{\sigma_{\phi}}{r})^T$. For the classical  deterministic two-body problem, there are two first integrals: the angular momentum
$M=mr^2w$ and the energy $E=\frac{1}{2}m(v^2+r^2w^2)-\frac{k}{r}$.
By Theorem \ref{the1}, neither $M$ nor $E$ is a first integral in the strong sense. However, by Theorem \ref{the1},
 the angular momentum $M=mr^2w$ still admits weak conservation property, namely, $M$ is a weak first integral.
\\
\\
\noindent{\bf Example 2}
As an example to illustrate Theorem \ref{the2}, let us consider system
\begin{align}
\left\{
\begin{aligned}
\mathrm{d}x_1 & =(ax_1+x_2x_3)\mathrm{d}t+(x_1-2x_2+x_1(x_2-x_3))\mathrm{d}B_t\\
\mathrm{d}x_2 & =(bx_2+x_2x_1-x_2x_3)\mathrm{d}t+(2x_2-x_3+x_2(x_3-x_1))\mathrm{d}B_t \label{ex} \\
\mathrm{d}x_3 & = (-ax_1-bx_2+x_2x_3)\mathrm{d}t+(x_3-x_1+x_3(x_1-x_2))\mathrm{d}B_t, \\
\end{aligned}
\right.
\end{align}
where $x$=$(x_1, x_2, x_3)\in \mathbb{C}^3$, and $B_t$ is an one-dimensional Brownian motion.

According to Theorem \ref{the1}, we can find $\Phi(x)=x_1+x_2+x_3$ is a strong first integral of SDEs (\ref{ex}).
In this case,
$$
Df(0) = \left( {\begin{array}{*{20}{c}}
a&0&0\\
0&b&0\\
{ - a}&{ - b}&0
\end{array}} \right),Dg(0) = \left( {\begin{array}{*{20}{c}}
1&{ - 2}&0\\
0&2&{ - 1}\\
{ - 1}&0&1
\end{array}} \right)
$$
and the eigenvalues of $Df(0)-\frac{1}{2}Dg(0)^2$ and $Dg(0)$ are
\begin{align*}
\lambda_1&=0,~~\lambda_2=\frac{1}{2}(\alpha+\sqrt{\beta}),~~
\lambda_3=\frac{1}{2}(\alpha-\sqrt{\beta}),\\
\mu_1&=0,~~\mu_2=2+i,~~\mu_3=2-i,
\end{align*}
where $\alpha=a+b-3$ and $\beta=(a-b)^2+8(a-b)-16$.

Obviously, the rank of the set
$$
\mathcal{S}_1:=\left\{(k_1,k_2,k_3)\in(\mathbb{Z}^+)^3:\mu_1k_1+\mu_2k_2+\mu_3k_3=0,k_1+k_2+k_3\neq0\right\}
$$
is equal to $1$. Hence, thanks to Theorem \ref{the2}, any analytic strong first integral of system (\ref{ex}) can be expanded a smooth function in $\Phi(x)$ if the rank of
$$
\mathcal{S}_0:=\left\{(k_1,k_2,k_3)\in(\mathbb{Z}^+)^3:\lambda_1k_1+\lambda_2k_2+\lambda_3k_3=0,k_1+k_2+k_3\neq0\right\}
$$
 is also equal to $1$, i.e.,
either
$\beta\neq0, \alpha/\sqrt{\beta}\notin\mathbb{Q}\cap(-1,1)$
or $\beta=0, \alpha\neq0$.
\\
\\
\noindent{\bf Example 3}
Consider the following $n$-dimensional SDEs of Lotka-Volterra type
\begin{align}\label{ex1}
 \mathrm{d}{x_i}(t) = {x_i}(t)\left( {{b_i} + \sum\limits_{j = 1}^n {{a_{ij}}{x_j}(t)} } \right)\mathrm{d}t + {x_i}\sum\limits_{j = 1}^n {{\sigma _{ij}}{x_j}(t)} \mathrm{d}B_t,~~ i=1,2,\cdot \cdot \cdot,n,
\end{align}
where $b_i \in \mathbb {R}$, $\sigma_{ij} \in \mathbb{R}$ and $B_t$ is an one-dimensional Brownian motion defined on the filtered space. This model was first derived in \cite{mao2003} as a stochastic extension of the fundamentally important population process, namely the Lotka-Volterra model. In \cite{mao2003}, Mao \emph{et al.} examined asymptotic behaviors of SDEs (\ref{ex1}). For more dynamical features of SDEs (\ref{ex1}) see \cite{zhu2009,du2006}.

According to Theorem \ref{the4} and Remark \ref{rrr}, if
$b_1,b_2,\cdots,b_n$ don't satisfy any $\mathbb{Z}^+-$resonant (or $\mathbb{Z}-$resonant) condition, then SDEs (\ref{ex1}) do not have any analytic weak first integral (or rational weak first integral).

\section{Discussion}

In this work, we have investigated the non-existence of local first integrals for stochastic differential equations, which can be regarded as an extension of the Poincare non-integrability theorem for ordinary differential equations.
In addition, there are three directions for our future work.
In one direction,
we will focus on the integrability and the normal form for SDEs.
We conjecture that the existence of $n-1$ functionally independent analytic strong first integrals
for  $\mathrm{d}X_t=\sum\limits_{i = 1}^m {{g_i}(X_t)\circ \mathrm{d}B^i_t}$
implies  there exists a random coordinate
transformation $X_t=H(\omega, Y_t)$ such that $H$ formally linearizes the above SDEs. The key point to deal with this problem is the relationship between the eigenvalues of $Dg_i(0)$ and the Lyapunov
exponents of the linear SDEs
 $\mathrm{d}X_t=\sum\limits_{i = 1}^m {{Dg_i(0)}(X_t)\circ \mathrm{d}B^i_t}.$
Next, we note that if the linear parts  of SDEs  has all its eigenvalues zero, then
the above results in this paper are trivial. For studying these cases we should consider semi-quasihomogeneous systems.
Hence, the other direction is to investigate the necessary conditions for the existence of  first integrals of SDEs (\ref{e1}) when $f$ and $g_i$ are semi-quasihomogeneous vector fields.
To this end, we think that the  definition
of Kowalevskaya
exponents should be introduced from ODEs to SDEs. Finally, our results in this paper studied the existence of functionally independent  first integrals of SDEs in a neighborhood of a singularity. The third direction
is to  investigate the existence of first integrals of SDEs in a neighborhood of a periodic orbit and the Floquet's theorem for SDEs should be studied.

\end{document}